\def\@logofont{\footnotesize}
\def\@setaddresses{\par
  \nobreak \begingroup
  \footnotesize
  \def\author##1{\nobreak\addvspace\bigskipamount}%
  \def\\{\par\nobreak}%
  \interlinepenalty\@M
  \def\address##1##2{\begingroup
    \par\addvspace\bigskipamount\indent
    \@ifnotempty{##1}{(\ignorespaces##1\unskip) }%
    {\scshape\ignorespaces##2}\par\endgroup}%
  \def\curraddr##1##2{\begingroup
    \@ifnotempty{##2}{\nobreak\indent\curraddrname
      \@ifnotempty{##1}{, \ignorespaces##1\unskip}\/:\space
      ##2\par}\endgroup}%
  \def\email##1##2{\begingroup
    \@ifnotempty{##2}{\nobreak\indent\emailaddrname
      \@ifnotempty{##1}{, \ignorespaces##1\unskip}\/:\space
      \ttfamily##2\par}\endgroup}%
  \def\urladdr##1##2{\begingroup
    \def~{\char`\~}%
    \@ifnotempty{##2}{\nobreak\indent\urladdrname
      \@ifnotempty{##1}{, \ignorespaces##1\unskip}\/:\space
      \ttfamily##2\par}\endgroup}%
  \addresses
  \endgroup
}
\renewcommand*\subjclass[2][2010]{%
  \def\@subjclass{#2}%
  \@ifundefined{subjclassname@#1}{%
    \ClassWarning{\@classname}{Unknown edition (#1) of Mathematics
      Subject Classification; using '2000'.}%
  }{%
    \@xp\let\@xp\subjclassname\csname subjclassname@#1\endcsname
  }%
}
\newtheorem{theorem}{Theorem}[section]
\newtheorem*{theorem*}{Theorem}
\newtheorem{lemma}[theorem]{Lemma}
\newtheorem{question}[theorem]{Question}
\newtheorem{conjecture}[theorem]{Conjecture}
\theoremstyle{definition}
\theoremstyle{remark}
\newtheorem{remark}[theorem]{Remark}
\newtheorem{example}[theorem]{Example}
\begin{document}
\title{The weak acyclic matching property in abelian groups}

\thanks{\textbf{Keywords and phrases}. abelian groups, acyclic matching property, weak acyclic matching property}
\thanks{\textbf{2020 Mathematics Subject Classification}. Primary: 05E16  ; Secondary: 20N02}

\author[M. Aliabadi and P. Taylor]{Mohsen Aliabadi$^{1}$ \and Peter Taylor$^2$}
\thanks{$^1$Department of Mathematics, Clayton State University, 2000 Clayton State Boulevard, Morrow, GA 30260, USA.  \url{maliabadisr@clayton.edu}, ORCID: 0000-0001-5331-2540\\
$^2$Independent researcher, Valencia, Spain. \url{pjt33@cantab.net}, ORCID: 0000-0002-0556-5524 }

\begin{abstract}
 A matching from a finite subset $A\subset\mathbb{Z}^n$ to another subset $B\subset\mathbb{Z}^n$ is a bijection $f : A \rightarrow B$ with the property that $a+f(a)$ never lies in $A$. A matching is called acyclic if it is uniquely determined by its multiplicity function. Alon et al. established the acyclic matching property for $\mathbb{Z}^n$, which was later extended to all abelian torsion-free groups. In a prior work, the authors of this paper settled the acyclic matching property for all abelian groups. The objective of this note is to explore a related concept, known as the weak acyclic matching property, within the context of abelian groups.
\end{abstract}

\maketitle

\section{Introduction}

Let $ (G,+) $ denote an abelian group. Consider nonempty finite subsets $ A, B \subseteq G $. A \textit{matching} from $ A $ to $ B $ is defined as a bijection $ f : A \rightarrow B $ satisfying the condition $ a + f(a) \notin A $ for every $ a \in A $. When such a matching exists from $ A $ to $ B $, we say that $ A $ is \textit{matched} to $ B $. This concept, introduced by Fan and Losonczy in \cite{Fan}, serves as a tool in exploring Wakeford's classical problem concerning canonical forms for symmetric tensors \cite{Wakeford}. It is evident that the existence of a matching from $ A $ to $ B $ requires $ |A| = |B| $ and $ 0 \notin B $.

An abelian group $ G $ possesses the \textit{matching property} if the aforementioned conditions on $ A $ and $ B $ suffice to ensure the existence of a matching from $ A $ to $ B $. Losonczy addressed the question of which groups possess this property in \cite{Losonczy}, stating:
\begin{theorem}\label{matching property}
    An abelian group $ G $ possesses the matching property if and only if $ G $ is torsion-free or cyclic of prime order.
\end{theorem}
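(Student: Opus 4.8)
The plan is to prove each direction of the biconditional separately, after recording the routine group-theoretic fact that an abelian group is torsion-free or cyclic of prime order \emph{precisely when it contains no finite nontrivial proper subgroup}: if $G$ is not torsion-free it has an element $g$ of some prime order $p$, and $\langle g\rangle$ is then a finite nontrivial subgroup that equals $G$ exactly when $G\cong\mathbb{Z}/p$. Thus it suffices to prove (i) that a group possessing a finite nontrivial proper subgroup lacks the matching property, and (ii) that $\mathbb{Z}^n$ (every $n$) and $\mathbb{Z}/p$ (every prime $p$) possess it. Statement (ii) then transfers to every torsion-free group, because any finite $A,B\subseteq G$ generate a finitely generated, hence free, subgroup $H\cong\mathbb{Z}^n$, and a matching from $A$ to $B$ inside $H$ is again a matching inside $G$.

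\emph{Necessity.} Let $H\le G$ be finite with $|H|\ge 2$ and $H\ne G$, pick $x\in G\setminus H$, and set $A=H$ and $B=(H\setminus\{0\})\cup\{x\}$; then $|A|=|B|$ and $0\notin B$. For any bijection $f\colon A\to B$ and any $a\in A=H$, the fact that $H$ is a subgroup gives $a+f(a)\in H\iff f(a)\in H$, so a matching would force $f(a)\notin H$ for every one of the $|H|\ge 2$ elements $a$ of $A$ — impossible, since $B\setminus H=\{x\}$ has just one element. Hence $G$ fails the matching property. The only thing needing care here is the structural reduction in the first paragraph.

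\emph{Sufficiency.} For finite $A,B$ with $|A|=|B|$ and $0\notin B$, consider the bipartite graph on $A\sqcup B$ joining $a$ to $b$ iff $a+b\notin A$; a matching from $A$ to $B$ is exactly a perfect matching of this graph, so by Hall's theorem one needs only $|N(A')|\ge|A'|$ for every $\emptyset\ne A'\subseteq A$ (the empty case is trivial). Writing $C(A')=\{g\in G: A'+g\subseteq A\}$, one checks $B\setminus N(A')=B\cap C(A')$, so Hall's condition reads $|B\cap C(A')|\le|A|-|A'|$; and since $0\in C(A')$ while $0\notin B$, the bound $|C(A')|\le|A|-|A'|+1$ already suffices. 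In $G=\mathbb{Z}^n$, fix the lexicographic order (a translation-invariant total order); listing $C(A')\cap B=\{b_1<\cdots<b_m\}$ (all nonzero), the set $\{\max A'+b_i: b_i>0\}\cup\{\min A'+b_i: b_i<0\}$ consists of $m$ distinct elements of $A\setminus A'$ — those of the first kind lying strictly above $\max A'$, those of the second strictly below $\min A'$ — so $|C(A')\cap B|\le|A|-|A'|$ outright. In $G=\mathbb{Z}/p$ one has $|A|=|B|<p$ because $0\notin B$, and from $A'+C(A')\subseteq A$ together with the Cauchy–Davenport inequality $|A|\ge|A'+C(A')|\ge\min(p,|A'|+|C(A')|-1)$, which (the minimum cannot be $p$, since $|A|<p$) rearranges to $|C(A')|\le|A|-|A'|+1$, exactly what is required.

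\emph{Where the work is.} No single step is deep; the entire content lies in verifying Hall's condition, and the point worth flagging is that the two positive cases genuinely require different inputs — a compatible linear order for the torsion-free case, the Cauchy–Davenport theorem for $\mathbb{Z}/p$ — unified only through the Hall framework and the bookkeeping forced by $0\notin B$.
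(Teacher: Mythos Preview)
The paper does not actually prove this theorem: it is quoted as a background result of Losonczy \cite{Losonczy} and stated without proof. So there is no ``paper's own proof'' to compare against.

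That said, your argument is correct and is essentially the classical proof. The structural reduction (an abelian group is torsion-free or cyclic of prime order iff it has no finite nontrivial proper subgroup) is right, the counterexample $A=H$, $B=(H\setminus\{0\})\cup\{x\}$ is the standard one, and the sufficiency argument via Hall's marriage theorem is exactly how Losonczy proceeds: the translation-invariant total order handles the torsion-free case and Cauchy--Davenport handles $\mathbb{Z}/p\mathbb{Z}$. One small cosmetic point: in the $\mathbb{Z}^n$ paragraph you announce that $|C(A')|\le|A|-|A'|+1$ ``already suffices'' but then prove the sharper bound $|B\cap C(A')|\le|A|-|A'|$ directly; either route works, but the exposition would be cleaner if you committed to one. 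Also, strictly speaking your injection argument for $\mathbb{Z}^n$ uses only that the order is translation-invariant and total, so it applies verbatim to any torsion-free abelian group once you invoke Levi's theorem that such groups are orderable --- this would let you skip the reduction to finitely generated subgroups, though what you wrote is also fine.
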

For any matching $ f:A\rightarrow B $, the associated multiplicity function $ m_f : G \rightarrow \mathbb{Z}_{\geq 0} $ is defined as follows:
\begin{align*}
  \forall x \in G, m_f(x) = |\{a \in A : a + f(a) = x\}|.   
\end{align*}

A matching $ f : A \rightarrow B $ is termed \textit{acyclic} if for any matching $ g: A \rightarrow B $, $ m_f = m_g $ implies $ f = g $. If an acyclic matching from $ A $ to $ B $ exists, we say that $ A $ is \textit{acyclically matched} to $ B $. A group $ G $ exhibits the \textit{acyclic matching property} if for any pair of subsets $ A $ and $ B $ in $ G $ with $ |A| = |B| $ and $ 0 \notin B $, there is at least one acyclic matching from $ A $ to $ B $. Alon et al. proved in \cite{Alon} that the additive group $ \mathbb{Z}^n $ possesses the acyclic matching property. Losonczy \cite{Losonczy} extended this result to all abelian torsion-free groups by exploiting a total ordering compatible with the structure of abelian torsion-free groups, established by Levi \cite{Levi}.

In \cite{Aliabadi 0}, it was noted that primes $p$ for which $\mathbb{Z}/p\mathbb{Z}$ lacks the acyclic matching property have a density of at least $\frac{7}{24}$. Moreover, building upon the rectification principle introduced by \cite{Lev}, which suggests that a sufficiently small subset of $\mathbb{Z}/p\mathbb{Z}$ can be embedded in integers while preserving certain additive properties, it was demonstrated in \cite{Aliabadi 0} that for sets $A,B\subset \mathbb{Z}/p\mathbb{Z}$, $A$ is acyclically matched to $B$ provided that $0$ is not an element of $B$ and $|A|=|B|\leq\sqrt{\log_2p}-1$.

The present authors \cite{Taylor} provided a necessary and sufficient condition for abelian groups to possess the acyclic matching property:
\begin{theorem}\label{nes-suf}
    An abelian group \( G \) possesses the acyclic matching property if and only if either \( G \) is torsion-free or \( G = \mathbb{Z}/p\mathbb{Z} \), where \( p \in \{2,3,5\} \).
\end{theorem}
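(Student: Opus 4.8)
The plan is to pass, via the matching property, to the case of cyclic groups of prime order, and then to treat the small primes and the large primes by opposite methods.

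First I would record the trivial implication that the acyclic matching property implies the matching property: an acyclic matching is in particular a matching. By Theorem~\ref{matching property} this already forces $G$ to be torsion-free or isomorphic to $\mathbb{Z}/p\mathbb{Z}$ for a prime $p$; torsion-free groups do have the acyclic matching property (Alon et al., extended by Losonczy, as recalled above), so the theorem is equivalent to the assertion that $\mathbb{Z}/p\mathbb{Z}$ has the acyclic matching property exactly when $p\in\{2,3,5\}$.

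For the small primes I would prove two elementary facts about a matching $f:A\to B$ in an abelian group, writing $s_a=a+f(a)$. (i) If $|A|\le 2$ then $f$ is acyclic: for $|A|=2$ a second matching $g$ would force $\{a_1+b_1,a_2+b_2\}=\{a_1+b_2,a_2+b_1\}$, hence $a_1=a_2$ or $b_1=b_2$, impossible. (ii) If $|A|=3$ and the multiset $\{s_a:a\in A\}$ has a repeated value, then $f$ is acyclic: if $s_{a_1}=s_{a_2}=s\ne s_{a_3}$, then $s-a_3\notin B$, so any matching with the same multiplicity function must send $a_1\mapsto s-a_1=f(a_1)$ and $a_2\mapsto s-a_2=f(a_2)$, hence coincides with $f$ (the all-equal case is similar). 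Consequently a non-acyclic matching with $|A|=3$ has three distinct sums, all lying in $G\setminus A$, so $|G\setminus A|\ge 3$. Now take $G=\mathbb{Z}/p\mathbb{Z}$ with $p\le 5$ and $A,B$ admissible with $|A|=|B|=n$; since $0\notin B$ we have $n\le p-1\le 4$. If $n\le 2$, apply (i). If $n=3$ (so $p=5$), then $|G\setminus A|=2<3$, so by (ii) every matching is acyclic. If $n=4$ (so $p=5$), then $A=\mathbb{Z}/5\mathbb{Z}\setminus\{c\}$ and the condition $a+f(a)\notin A$ forces $a+f(a)=c$, so $f(a)=c-a$ is the only matching, trivially acyclic. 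As $\mathbb{Z}/p\mathbb{Z}$ has the matching property for $p$ prime (Theorem~\ref{matching property}), a matching always exists, so $\mathbb{Z}/p\mathbb{Z}$ has the acyclic matching property for $p\in\{2,3,5\}$.

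The remaining and harder task is to show that $\mathbb{Z}/p\mathbb{Z}$ fails the acyclic matching property for every prime $p\ge 7$, i.e.\ to produce admissible $A,B$ admitting no acyclic matching. The prototype is $p=7$ with $A=B=\{1,2,4\}$: this set is invariant under $x\mapsto 2x$, so the identity and all three transpositions fail the matching condition (each fixes some $a$ with $2a\in A$), leaving only the two $3$-cycles $1\to 2\to 4\to 1$ and $1\to 4\to 2\to 1$, which share the multiplicity function supported by $\{3,5,6\}$ (all values $1$); neither is acyclic. For a general prime one wants a configuration in which every matching can be converted into a distinct one with the same multiplicity function by cyclically permuting three or more of the sums $a+f(a)$, which requires excluding ``rigid'' matchings such as the identity, transpositions, or more generally permutations with fixed points or short cycles, since these tend to realize their multiplicity function uniquely. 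Excluding them imposes additive constraints on the configuration: for $A=B$ a multiplicative subgroup $H$, for instance, one needs $2\in H$ to kill the rigid matchings but $H$ of odd order so that $x\mapsto -x$ is not an admissible acyclic matching, and these two requirements are incompatible precisely when the multiplicative order of $2$ modulo $p$ is even. The main obstacle is therefore uniformity across all $p\ge 7$: no single clean family of configurations works for every prime, so the construction must either branch according to the arithmetic of $p$ (the order of $2$, or of some suitable small-index subgroup) or use configurations whose size grows with $p$; and for each configuration one must verify that genuinely \emph{every} matching, not merely some pair of them, fails to be acyclic.
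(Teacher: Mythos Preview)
The paper does not actually prove this theorem; it is quoted from the authors' earlier work \cite{Taylor} and simply cited here as background. So there is no ``paper's own proof'' to compare against, and your proposal must be judged on its own merits.

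Your reduction to prime cyclic groups via Theorem~\ref{matching property} is correct, and your treatment of $p\in\{2,3,5\}$ is clean and complete: the pigeonhole argument for $|A|=3$ in $\mathbb{Z}/5\mathbb{Z}$ (forcing a repeated sum, hence acyclicity by your fact~(ii)) and the uniqueness argument for $|A|=4$ are both valid.

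The genuine gap is the case $p\ge 7$. You give the standard example $A=B=\{1,2,4\}\subset\mathbb{Z}/7\mathbb{Z}$ and correctly verify that only the two $3$-cycles are matchings and that they share a multiplicity function, so neither is acyclic. But for general $p\ge 7$ you do not produce a counterexample; you only describe the shape of the difficulty (the need to rule out \emph{every} matching as acyclic, the arithmetic obstructions depending on the order of $2$ modulo $p$, and the apparent necessity of branching on $p$). That diagnosis is accurate, but it is not a proof. What is missing is an explicit construction, for every prime $p\ge 7$, of admissible sets $A,B$ together with a verification that no matching from $A$ to $B$ is acyclic --- and this is precisely the nontrivial content of the result in \cite{Taylor}. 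Without it, the ``only if'' direction is established only for $p=7$.
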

 See \cite{Hamidoune} for insights into the enumeration aspects of matchings. Linear analogues of matching theory are developed in \cite{Eliahou}, while a matroidal version is presented in \cite{Zerbib}. In particular, the linear version of acyclic matchings is studied in \cite{Aliabadi 0}. From the viewpoint of additive number theory, connections to matching theory can also be found in \cite{Lev 0}.\\
 
\textbf{Acyclic matchings for specific sets:} When discussing matchings between two sets $A$ and $B$ within $G$, two scenarios represent the extremes of the spectrum in terms of the number of matchings:
\begin{enumerate}
    \item $A+B=A$: In this case $A$ is not matched to $B$. This case is treated in \cite[Lemma 2.3] {Aliabadi 3} where it is proved that in this situation $B$ must be a subgroup of $G$, and $A$ is a coset of $B$.
    \item $A\cap (A+B)=\emptyset$: In this case, not only is $A$ matched to $B$ but every bijection from $A$ to $B$ constitutes a matching, resulting in the maximum number of matchings.
    \end{enumerate}
Motivated by the preceding discussion, the acyclic matching problem arises for sets $A$ and $B$ where $A\cap (A+B)=\emptyset$. Intuitively, the situation might be complex when $A\cap (A+B)=\emptyset$ because there could be as many as $|A|!$ matchings, increasing the probability of at least one acyclic matching existing. On the other hand, the number of matchings may grow faster than the number of multiplicity functions, so the average number of matchings per multiplicity function may be large reducing the probability of at least one acyclic matching existing. The following special cases of acyclic matching were proved in \cite{Aliabadi 0}:
\begin{theorem}
Let $A$ be a subset of the cyclic group $\mathbb{Z}/p\mathbb{Z}$ where $p$ is a prime number. Suppose $A$ satisfies $A\cap 2A = \emptyset$ where $2A=\{2a \mid a\in A\}$and $A$ is of size $k$ with $k\cdot 2^{k-1} < p$. Then $A$ is acyclically matched to itself via the identity map.
\end{theorem}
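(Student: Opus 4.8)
The strategy is to push the whole question over to the integers by rectification, where the identity becomes the obvious unique bijection with the prescribed multiplicity, and then pull the conclusion back.

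First I would settle existence and recast acyclicity in concrete terms. Since $A\cap 2A=\emptyset$ we get $0\notin A$ (otherwise $0=2\cdot 0\in A\cap 2A$) and $a+\mathrm{id}(a)=2a\notin A$ for all $a\in A$, so $\mathrm{id}_A$ is genuinely a matching from $A$ to $A$. It remains to show it is acyclic: any matching $g\colon A\to A$ with $m_g=m_{\mathrm{id}}$ equals $\mathrm{id}$. Assume $p$ odd (if $p=2$ then $A\cap 2A=\emptyset$ forces $|A|\le 1$, which is trivial). As $x\mapsto 2x$ is a bijection of $\mathbb{Z}/p\mathbb{Z}$, $m_{\mathrm{id}}$ is the indicator function of $2A$, so $m_g=m_{\mathrm{id}}$ is equivalent to the existence of a bijection $\beta\colon A\to A$ with $a+g(a)=2\beta(a)$ for every $a\in A$. (Curiously, the matching condition on $g$ plays no further role; only that $g$ is a bijection solving these $k$ equations.)

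Next, the rectification step. The hypothesis $k\cdot 2^{k-1}<p$ is exactly the regime in which the rectification principle of Lev \cite{Lev} (as applied in \cite{Aliabadi 0}) supplies a Freiman isomorphism of order $2$, say $\psi\colon A\to A'$, onto a set $A'\subset\mathbb{Z}$. Each equation $a+g(a)=2\beta(a)$ is a relation $x_1+x_2=x_3+x_4$ with $x_1,\dots,x_4\in A$, hence is preserved by $\psi$: $\psi(a)+\psi(g(a))=2\psi(\beta(a))$ in $\mathbb{Z}$. Writing $g':=\psi\circ g\circ\psi^{-1}$, a bijection of $A'$, and letting $a$ run over $A$, the left-hand sides sweep out the multiset $\{\,b+g'(b):b\in A'\,\}$ and the right-hand sides sweep out $\{\,2c:c\in A'\,\}=2A'$ (because $\psi\circ\beta$ maps $A$ bijectively onto $A'$). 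Thus $m_{g'}=m_{\mathrm{id}_{A'}}$ over $\mathbb{Z}$; if we can show $g'=\mathrm{id}_{A'}$ then $g=\mathrm{id}_A$ follows from injectivity of $\psi$.

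Finally, the integer statement by induction on $|A'|$ using the order on $\mathbb{Z}$: let $\alpha=\max A'$; for each $b\in A'$ we have $b+g'(b)\le 2\alpha$ with equality only if $b=g'(b)=\alpha$, and since $2\alpha$ is the simple maximum of the multiset $2A'=\{\,b+g'(b):b\in A'\,\}$ it must be attained, so $g'(\alpha)=\alpha$; deleting $\alpha$ leaves a bijection of $A'\setminus\{\alpha\}$ with sum-multiset $2(A'\setminus\{\alpha\})$, and induction finishes it. The one step deserving care is the rectification: one must verify that $k\cdot 2^{k-1}<p$ really does license embedding a $k$-element subset of $\mathbb{Z}/p\mathbb{Z}$ into $\mathbb{Z}$ with its $2$-additive structure intact, and that order $2$ suffices---which it does, since the multiplicity function only records two-term sums from $A$. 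Everything else is elementary.
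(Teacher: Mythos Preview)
This statement is quoted in the paper from \cite{Aliabadi 0} without proof, so there is no in-paper argument to compare your attempt against.

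On its own merits: your reduction of acyclicity of $\mathrm{id}$ to the multiset identity $\{a+g(a):a\in A\}=2A$ is correct, the observation that each relation $a+g(a)=\beta(a)+\beta(a)$ is an $F_2$-relation and hence transfers through a Freiman $2$-isomorphism is correct, and the peel-off-the-maximum induction in $\mathbb{Z}$ is correct. This is the natural approach and very likely coincides with the argument in \cite{Aliabadi 0}.

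The one real gap is the rectification bound. You assert that $k\cdot 2^{k-1}<p$ is ``exactly the regime'' in which Lev's principle yields an order-$2$ Freiman embedding of a $k$-set into $\mathbb{Z}$, but this is not a standard formulation of the threshold in \cite{Lev}, and you give no argument. The naive box pigeonhole (find a dilate of diameter $<p/2$) needs roughly $p>4^{k-1}$, and for $k\ge 3$ one has $4^{k-1}>k\cdot 2^{k-1}$, so your hypothesis does not obviously imply it. You rightly flag this as the step deserving care, but as written it is asserted rather than established: you must either cite the precise rectification theorem from \cite{Lev} or \cite{Aliabadi 0} and check that $k\cdot 2^{k-1}<p$ meets its hypothesis, or give a direct argument tailored to the specific relations $a_1+a_2=2a_3$ that you actually need to preserve.
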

\begin{theorem}
Let $A$ and $B$ be subsets of an abelian group $G$. Suppose $A$ and $B$ are of the same size satisfying $A \cap (A + B) = \emptyset$. Then there exists an acyclic matching $f : A \rightarrow B$ if $B$ is a Sidon set. (That is, the equation $x + y = z + w$ has no solution in $B$ with $\{x, y\} \cap \{z, w\} = \emptyset$.)
\end{theorem}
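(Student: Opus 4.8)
The plan is to prove the statement by induction on $n = |A| = |B|$, peeling off one carefully chosen pair $(a_0,b_0)$ at each step. The starting observation is that $A \cap (A+B) = \emptyset$ makes \emph{every} bijection $f \colon A \to B$ a matching, so it suffices to produce a single bijection that is acyclic. The base case $n = 1$ is immediate. For the inductive step I would first record a simple sufficient condition for acyclicity: writing $r(v) = |\{(a,b) \in A \times B : a+b = v\}|$ for the representation function, if $f \colon A \to B$ is a bijection with $r(v) = m_f(v)$ for every $v$ in the image of $a \mapsto a + f(a)$, then $f$ is acyclic. Indeed, if $m_g = m_f$, then for each such $v$ the $m_g(v)$ pairs $(a,g(a))$ with $a+g(a)=v$ lie among the $r(v) = m_f(v)$ available pairs summing to $v$, which are precisely the pairs $(a,f(a))$ with $a + f(a) = v$; this forces $g = f$ value by value.

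The crux is the following claim: under the hypotheses there is a pair $(a_0,b_0) \in A \times B$ whose sum $v_0 = a_0 + b_0$ is uniquely represented, i.e.\ $r(v_0) = 1$. Suppose not, so $r(v) \ge 2$ for every $v \in A + B$. On one hand $\sum_v r(v)\bigl(r(v)-1\bigr) \ge \sum_v r(v) = |A|\,|B| = n^2$. On the other hand $\sum_v r(v)\bigl(r(v)-1\bigr)$ counts ordered pairs $\bigl((a,b),(a',b')\bigr)$ with $a+b = a'+b'$ and $(a,b)\neq(a',b')$; any such quadruple has $a \neq a'$ and records the identity $a-a' = b'-b$ with $a-a'\neq 0$, and since $B$ is Sidon a nonzero element of $G$ has at most one representation as a difference of two elements of $B$, so each of the $n(n-1)$ ordered pairs $(a,a')$ with $a\neq a'$ occurs in at most one quadruple. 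Hence $\sum_v r(v)\bigl(r(v)-1\bigr) \le n(n-1) < n^2$, a contradiction.

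Granting the claim, set $A' = A \setminus \{a_0\}$ and $B' = B \setminus \{b_0\}$. These inherit the hypotheses: $B'$ is Sidon as a subset of $B$, and $A' \cap (A'+B') \subseteq A \cap (A+B) = \emptyset$; so by induction there is an acyclic matching $f' \colon A' \to B'$, and I put $f = f' \cup \{a_0 \mapsto b_0\}$. To see $f$ is acyclic, let $g \colon A \to B$ be a bijection with $m_g = m_f$. Since $r(v_0) = 1$, the value $v_0$ can be realized by $f$ only through the pair $(a_0,b_0)$, so $m_f(v_0) = 1$, hence $m_g(v_0) = 1$, and the unique $a$ with $a+g(a) = v_0$ must be $a_0$ with $g(a_0) = b_0$. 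Then $g|_{A'} \colon A' \to B'$ is a bijection with $m_{g|_{A'}} = m_{f'}$, so $g|_{A'} = f'$ by acyclicity of $f'$, and therefore $g = f$. This closes the induction.

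I expect the main obstacle to be the existence claim in the second paragraph; the remainder is bookkeeping. The double-counting inequality $\sum_v r(v)(r(v)-1) \le n(n-1)$ is exactly where the Sidon hypothesis is consumed, and one must verify it strictly beats $n^2$ for every $n \ge 1$, which it does. It is also worth noting as a check that $A \cap (A+B) = \emptyset$ enters only through the reduction of "matching" to "bijection" and through the fact that $(A',B')$ again satisfies this condition, and never in the combinatorial core.
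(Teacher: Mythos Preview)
Your argument is correct. The counting step is sound: each nonzero $d\in G$ has at most one ordered representation $d=b'-b$ with $b,b'\in B$ because $B$ is Sidon, so $\sum_v r(v)\bigl(r(v)-1\bigr)\le n(n-1)<n^2$, contradicting $r(v)\ge 2$ everywhere. The peeling-off induction is then clean; one small remark is that your ``sufficient condition'' paragraph ($r(v)=m_f(v)$ on the image forces acyclicity) is true but never actually invoked---the induction stands on its own without it.

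As for comparison: the paper does not prove this theorem at all; it is quoted from \cite{Aliabadi 0} as background. What the present paper does instead is prove a strictly stronger statement (Theorem~\ref{main}: every abelian group has the weak acyclic matching property) via Lemma~\ref{main-lemma}, which drops the Sidon hypothesis entirely. The mechanism there is different from yours: rather than locating a uniquely represented sum, one fixes an arbitrary enumeration $c_1,\dots,c_k$ of $A+B$ and greedily assigns, at step $j$, \emph{all} still-unassigned $a$ that can hit $c_j$; uniqueness of $m_{f_0}$ follows because this construction maximises $m_{f_0}(c_j)$ subject to the earlier assignments. Your approach extracts a single pair with $r(v_0)=1$ and recurses, which is elegant but genuinely consumes the Sidon hypothesis in the pigeonhole inequality; the paper's greedy argument needs only left-cancellativity, and so works even when no uniquely represented sum exists.
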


Following \cite{Aliabadi 2}, we define an abelian group $G$ to possess the \textit{weak acyclic matching property} if, for every finite nonempty subsets $A$ and $B$ of $G$ with $|A|=|B|$ and $A\cap (A+B)=\emptyset$, $A$ is acyclically matched to $B$. In \cite{Aliabadi 2}, the following conjecture concerning the weak acyclic matching property was proposed.
\begin{conjecture}\label{w.a.m.p}
    There are infinitely many $n$ for which $\mathbb{Z}/n\mathbb{Z}$ has the weak acyclic matching property.
\end{conjecture}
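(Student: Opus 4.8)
The plan is to isolate an infinite family of primes $p$ and to show directly that $\mathbb{Z}/p\mathbb{Z}$ has the weak acyclic matching property; since distinct primes give distinct values of $n$, this settles \Cref{w.a.m.p}. Fix a prime $p$ and nonempty subsets $A,B\subseteq\mathbb{Z}/p\mathbb{Z}$ with $|A|=|B|=k$ and $A\cap(A+B)=\emptyset$. The emptiness hypothesis forces $|A+B|\le p-k$, while the Cauchy--Davenport inequality gives $|A+B|\ge 2k-1$; hence $k\le (p+1)/3$. In particular for $p\le 7$ only the cases $k\le 2$ occur, and these are always acyclically matchable by the elementary observation that interchanging the images of two elements of a matching always changes its multiplicity function. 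So the content lies in exhibiting, for each admissible pair with $k\ge 3$, a single matching that is provably acyclic; the natural candidate is an order--preserving matching relative to a well-chosen linear ordering of (a translate of) $\mathbb{Z}/p\mathbb{Z}$.

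First I would record the mechanism that makes order--preserving matchings acyclic in $\mathbb{Z}$ as a transfer principle: if there is a map $\phi$ from $A\cup B\cup(A+B)$ to $\mathbb{Z}$ that is injective on this set and additive on $A\times B$ (that is, $\phi(a+b)=\phi(a)+\phi(b)$ whenever $a\in A$ and $b\in B$), then the $\phi$--pullback of the order--preserving bijection between $\phi(A)$ and $\phi(B)$ is an acyclic matching from $A$ to $B$. This is the classical ``peel off the largest sum'' induction of Alon et al.\ \cite{Alon} and Losonczy \cite{Losonczy}, run through $\phi$: if $g$ has the same multiplicity function, then in $\phi$--coordinates the maximal sum $\phi(a)+\phi(g(a))$ is attained only at the $\phi$--largest element of $A$ paired with the $\phi$--largest element of $B$, which pins down one value of $g$ and reduces $k$.

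Granting this, the problem becomes one of \emph{rectification}: for which $k$ (relative to $p$) can an admissible pair $(A,B)$ be straightened by such a $\phi$? When $k$ is small compared with $p$ this is exactly Freiman--Lev rectification \cite{Lev} applied to $A\cup B$, exploiting the slack $A\cap(A+B)=\emptyset$, and it reproves (and slightly extends) the bound $k\le\sqrt{\log_2 p}-1$ of \cite{Aliabadi 0}. At the other extreme, when $k$ is close to $(p+1)/3$ the sumset $A+B$ is nearly as small as Cauchy--Davenport permits, so Vosper's theorem and the $3k-4$ theorem in $\mathbb{Z}/p\mathbb{Z}$ force $A$ and $B$ into short arithmetic progressions with a common difference; after dividing by that difference they become genuine short intervals, which are rectifiable, and the transfer principle applies again. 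The infinite family of primes would then be defined (if one cannot do better) as those $p$ for which the two quantitative thresholds already cover the whole interval $3\le k\le (p+1)/3$.

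The main obstacle is precisely making those two regimes meet. Freiman--Lev rectification is only effective for $k$ at most logarithmic (or, using the strongest available results, polynomially small) in $p$, whereas the inverse--sumset structure is only available once $|A+B|/|A|$ is below a small absolute constant, i.e.\ once $k$ is a constant fraction of $p$; there is a wide ``middle range'' of $k$ where neither tool applies. Closing it appears to require a genuinely new ingredient --- for instance a direct construction of an acyclic matching using only $A\cap(A+B)=\emptyset$ (perhaps an induction that peels off a carefully chosen pair $(a_0,b_0)$, keeping the remaining pair admissible, and controls the finitely many alternating cycles through $a_0$ by a counting estimate), or a second--moment argument bounding the number of ordered pairs of distinct matchings with equal multiplicity function by strictly less than $k!$, which would force some multiplicity function to have a unique preimage. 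I expect the eventual proof to combine rectification in the two extreme regimes with such a combinatorial argument on the middle range --- or, alternatively, to pick an infinite family of primes for which the admissible sizes $k$ are forced to be additively structured for an independent reason.
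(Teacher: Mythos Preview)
Your proposal is not a proof: you yourself identify an unclosed ``middle range'' of $k$ where neither rectification nor inverse sumset structure applies, and you offer only speculation about how to handle it. So as written this does not establish the conjecture for any explicit infinite family of $n$.

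More importantly, the paper bypasses all of the additive--combinatorial machinery you invoke and proves the far stronger statement that \emph{every} abelian group has the weak acyclic matching property. The key observation you are missing is that the ``peel off the largest sum'' mechanism does not require a total order on $G$ compatible with addition, nor any rectification to $\mathbb{Z}$: it requires only an \emph{arbitrary} enumeration $c_1,\ldots,c_k$ of the finite set $A+B$, together with left cancellation. Given such an enumeration, one builds $f_0$ greedily: at step $j$, among the still-unassigned elements of $A$ and $B$, pair every $a$ for which some unassigned $b$ satisfies $a+b=c_j$ with that (unique, by cancellation) $b$. This $f_0$ maximises $m_{f_0}(c_j)$ subject to the assignments already forced by $c_1,\ldots,c_{j-1}$, and an easy induction shows that any $g$ with $m_g=m_{f_0}$ must agree with $f_0$ at every step. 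Since $A\cap(A+B)=\emptyset$ makes every bijection a matching, $f_0$ is an acyclic matching.

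In short: your transfer principle is correct but is a special case of something much more flexible. The compatibility of the order with $+$ is used in the Alon--Losonczy argument only to guarantee that the maximal sum is attained uniquely; but uniqueness of the pair giving a prescribed sum already follows from cancellation alone, so one may process the elements of $A+B$ in any order whatsoever. This removes all dependence on $p$, on Cauchy--Davenport, on Vosper, and on rectification, and yields the result for every abelian group (indeed for quasigroups).
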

The following question was also raised in \cite{Aliabadi 0}.
\begin{question}\label{w.a.m}
Does $\mathbb{Z}/p\mathbb{Z}$ possess the weak acyclic matching property for every prime $p$?
\end{question}
In this note, our main result, stated as Theorem ~\ref{main}, provides an affirmative answer to Conjecture ~\ref{w.a.m.p} and thoroughly addresses Question ~\ref{w.a.m}. Furthermore, Theorem ~\ref{main} classifies all abelian groups with respect to their adherence to the weak acyclic matching property.

\begin{remark}
  It is worth noting that while the matching theory developed in this paper is related to the classical theory introduced by Philip Hall in 1935, the two are conceptually distinct. In particular, to address a linear algebra problem posed by Wakeford~\cite{Wakeford}, Fan~\cite{Fan} constructed a bipartite graph \( \mathcal{G} = (V(\mathcal{G}), E(\mathcal{G})) \) based on two sets \( A, B \subset \mathbb{Z}^n \), which may overlap. Each element \( a \in A \) is assigned a symbol \( x_a \), and each \( b \in B \) a symbol \( y_b \), forming vertex sets
\[
X = \{x_a : a \in A\} \quad \text{and} \quad Y = \{y_b : b \in B\}.
\]
The graph has vertex set \( V(\mathcal{G}) = X \cup Y \), with an edge between \( x_a \in X \) and \( y_b \in Y \) if and only if \( a + b \notin A \).

In graph-theoretic terms, a \emph{matching} is a set of edges with no shared endpoints, and a \emph{perfect matching} is one that covers all vertices. In the bipartite graph associated with \( A \) and \( B \), perfect matchings correspond to bijections \( f : A \to B \) such that \( a + f(a) \notin A \) for all \( a \in A \). Wakeford’s question thus reduces to determining the existence of such perfect matchings, known as acyclic matchings from \( A \) to \( B \), discussed earlier.

\end{remark}

\section{Main results}
\textbf{Multiplicity functions in a more general setting:} We begin with a few definitions. Let $A$, $B$ and $C$ be three nonempty sets. An operator $\oplus: A \times B \to C$ is said to possess the {\it{left cancellation property}} provided:
$$
a \oplus b_1 = a \oplus b_2 \iff b_1 = b_2,
$$
for every $a\in A$ and $b_1, b_2\in B$.\\
It will be convenient to define also the operator $\ominus:C \times A \to B$ as an {\it{inverse}} operation: 
$$
c \ominus a = b \iff a \oplus b = c,
$$
where $a\in A$, $b\in B$ and $c\in C$.\\
Previously, we introduced a multiplicity function associated with a matching. We may consider a similar definition of a multiplicity function associated with a bijection. Let $A$, $B$ and $C$ be three nonempty sets. Let $\oplus:A\times B\to C$ be an operator. Given a bijection $f:A\to B$, the associated multiplicity function $ m_f : C \rightarrow \mathbb{Z}_{\geq 0} $ is defined as
\begin{align*}
  \forall x \in C, m_f(x) = |\{a \in A : a \oplus f(a) = x\}|. 
\end{align*}

In the following lemma, we obtain a bijection with a unique multiplicity function.

\begin{lemma}\label{main-lemma}
    Let $A, B$ be finite sets of the same cardinality. Let $C$ be a set. Let $\oplus: A \times B \to C$ be an operator with the left cancellation property and also the operator $\ominus$ as an inverse operation. Consider the set $F$ of bijections $f: A \to B$ with corresponding multiplicity functions $m_f(c) = |\{ a \in A : a \oplus f(a) = c \}|$. Then there exists an element $f_0 \in F$ which has a unique $m_{f_0}$. That is, if $g \in F$ and $m_g = m_{f_0}$ then $g = f_0$.
\end{lemma}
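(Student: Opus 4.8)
The plan is to induct on $n = |A| = |B|$, building $f_0$ greedily by fixing where one element goes and recursing on the rest. The base case $n \le 1$ is trivial, since there is only one bijection. For the inductive step, I would try to identify a special pair $(a^*, b^*) \in A \times B$ with the property that in \emph{every} bijection $g \in F$ with $m_g = m_{f_0}$ (for the $f_0$ we are constructing), we are forced to have $g(a^*) = b^*$; then restrict $\oplus$ to $(A \setminus \{a^*\}) \times (B \setminus \{b^*\})$, which still has the left cancellation property, apply the inductive hypothesis to get a bijection with a unique multiplicity function on the smaller sets, and extend it by $a^* \mapsto b^*$.

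\textbf{Choosing the forced pair.} The natural device is an extremal/ordering argument. Since $C$ need not be ordered, I would instead argue combinatorially: consider the ``column'' sets $a \oplus B = \{a \oplus b : b \in B\}$ for each $a \in A$. Pick $a^*$ to be an element of $A$ for which the value set $a^* \oplus B$ contains some $c^*$ that is \emph{not} realized as $a \oplus b$ for any other $a \ne a^*$ and any $b \in B$ — equivalently, $c^*$ lies in $a^* \oplus B$ but in no other column. If such a ``private'' value $c^*$ exists, then set $b^* = c^* \ominus a^*$: in any bijection $g$, the value $c^*$ can only be hit by $a^*$, so $m_g(c^*) \in \{0,1\}$, and $m_g(c^*) = 1$ precisely when $g(a^*) = b^*$. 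Building $f_0$ so that $m_{f_0}(c^*) = 1$ (which is possible) then forces $g(a^*) = b^*$ for any $g$ matching that multiplicity function, and we recurse. The genuinely hard part will be the case where \emph{no} column has a private value — i.e., every value $a \oplus b$ is shared across at least two columns. Here I expect one needs a counting argument: by left cancellation each column has exactly $n$ distinct values, so $|A \oplus B| \le n^2 - \text{(repetitions)}$; if repetitions are forced everywhere one must extract structure, perhaps by looking at a value $c$ with maximal multiplicity over all bijections and showing the set of $a$ that can produce it is constrained, then peeling off a whole ``block'' rather than a single pair.

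\textbf{Alternative line if the greedy step stalls.} If the private-value dichotomy is not clean enough, I would fall back on a direct extremal argument over multiplicity functions themselves: order the finitely many achievable multiplicity functions $\{m_g : g \in F\}$ and pick one, say $m$, that is lexicographically extreme with respect to some fixed enumeration $c_1, c_2, \dots$ of $\bigcup_{a} (a \oplus B)$ — say, maximizing $m(c_1)$, then $m(c_2)$, and so on. Let $f_0$ be any bijection with $m_{f_0} = m$. The claim is then that $m_{f_0}$ is unique: if $g \in F$ has $m_g = m_{f_0}$, I would compare $f_0$ and $g$ by examining the first index $i$ where the sets $\{a : a \oplus f_0(a) = c_i\}$ and $\{a : a \oplus g(a) = c_i\}$ differ, and derive a contradiction with extremality by swapping along an alternating cycle in the symmetric difference of $f_0$ and $g$ to strictly increase some earlier coordinate — this is exactly the ``acyclic'' mechanism from the classical $\mathbb{Z}^n$ case, now abstracted via left cancellation. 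The main obstacle in either approach is the same: ensuring that the cancellation property alone (with no group structure, no order on $C$) suffices to run the swap/extremality argument, so I would be careful to phrase every step using only $\oplus$, $\ominus$, and left cancellation.
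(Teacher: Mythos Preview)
Your first approach (finding a ``private'' value $c^*$ unique to one column) has a real gap: such a value need not exist, and your fallback sketch for that case (``peeling off a whole block'') is not an argument. For a concrete obstruction, take $A = B = \{0,1\}$ with $\oplus$ addition in $\mathbb{Z}/2\mathbb{Z}$: both columns $0 \oplus B$ and $1 \oplus B$ equal $\{0,1\}$, so no private value exists, yet both bijections here are acyclic. Incidentally, the patch you gesture at---pick some $c$, peel off \emph{all} $a$ that can hit $c$ together with their (unique, by left cancellation) partners, then recurse---is exactly what the paper does and is also what your alternative approach amounts to; so the first approach, once repaired, collapses into the second.

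Your alternative approach is essentially the paper's. The paper fixes an arbitrary enumeration $c_1,\ldots,c_k$ of $C' = \{a \oplus b : (a,b)\in A\times B\}$ and constructs $f_0$ greedily: at step $j$, assign $f_0(a)=c_j\ominus a$ for every still-unassigned $a$ that can hit $c_j$ with a still-unassigned $b$. This is precisely your lexicographically maximal bijection. For uniqueness, however, the paper does \emph{not} use alternating cycles; it runs a clean forward induction on $j$: if $g$ agrees with $f_0$ on all assignments from steps $1,\ldots,j-1$, then the elements of $A$ that can contribute to $c_j$ under $g$ are exactly the set $A'_j$ from the construction (each with a unique partner, by left cancellation), so $m_g(c_j)=m_{f_0}(c_j)$ forces $g$ to agree with $f_0$ on $A'_j$ as well. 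Your swap sketch (``strictly increase some earlier coordinate'') is slightly off---coordinates before $i$ are already equal and maximal, so nothing earlier can increase---and the induction above replaces it entirely with no cycle analysis needed.
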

\begin{proof}
    The proof begins by constructing such an $f_0$. Let $C' = \{ a \oplus b : (a, b) \in A \times B \}$ be the image of $\oplus$. Since $A$ and $B$ are finite, $C'$ is also finite and we can label its elements $c_1, \ldots, c_k$ where $k = |C'|$.

    Let $A_1 = A, B_1 = B$. For each $1\leq j\leq k$, let $A'_j = \{ a \in A_j|\hspace{0.1cm} \exists b \in B_j : a \oplus b = c_j \}$. For each $a \in A'_j$ we assign $f_0(a) = c_j \ominus a$. For the next iteration we define $A_{j+1} = A_j \setminus A'_j$ and $B_{j+1} = B_j \setminus \{ c_j \ominus a : a \in A'_j\}$. 

    Since the paired elements of $A$ and $B$ are removed from the active subsets $A_{j+1}$, $B_{j+1}$ it is evident that no element is paired twice. Since $|A| = |B|$ and $|A'_j| = |B'_j|$, by induction we always have $|A_j| = |B_j|$.  We claim that $|A_{k+1}|=0$. Suppose for contradiction that $|A_{k+1}| > 0$. Then, we can arbitrarily choose $a' \in A_{k+1}$ and $b' \in B_{k+1}$. Since there exists $j \le k$ for which $a' \oplus b' = c_j$, at least one of them should have been removed no later than at step $j$, which leads to a contradiction. Hence $|A_{k+1}| = 0$. Therefore $f_0$ is a bijection $A \to B$, as claimed.

    We show the uniqueness of $m_{f_0}$ by assuming that $g:A \to B$ is a bijection and proving the property $$\left(\forall 1 \le i < j: m_g(c_i) = m_{f_0}(c_i)\right) \implies \left( \forall 1 \le i < j: \forall a \in A'_i: g(a) = f_0(a) \right)$$ by induction on $j$.

    When $j = 1$ the right-hand side of the implication is trivially true, giving the base case.

    For the inductive step we note that, by construction, $f_0$ maximizes $m_{f_0}(j)$ subject to the assignments made in steps $1$ to $j-1$. Therefore if $g$ agrees with $f_0$ on all the assignments made in those steps and $m_g(c_j) = m_{f_0}(c_j)$, $g$ must make the same assignments as those made for $f_0$ in step $j$. It cannot assign an $a$ which is not in $A_j$ because it is already assigned; the values in $A_j$ that can potentially be assigned to yield $c_j$ are precisely those in $A'_j$; and each has a unique  $b \in B_j$ (and, indeed, $b \in B$) to which it can be assigned in order to yield $c_j$, because of the left cancellation property.
    Therefore, this property holds for all \( j \), and we observe that when specialized to \( j = k+1 \), it implies that if \( m_g = m_{f_0} \), then \( g = f_0 \).

\end{proof}

\begin{example}
    We implement the construction described in Lemma ~\ref{main-lemma} to find the acyclic matching $f_0 : A \to B$, where $A = \{0, 1, 2, 7\}$ and $B = \{3, 4, 9, 10\}$ are considered as subsets of $\mathbb{Z}/13\mathbb{Z}$ with $\oplus$ as the usual group operation. Then $C' = \{3, 4, 5, 6, 9, 10, 11, 12\}$ and we can take the natural ordering.

    $A_1 = \{0, 1, 2, 7\}$, $B_1 = \{3, 4, 9, 10\}$, $c_1 = 3$. We obtain $A'_1 = \{0, 7\}$ and assign $f_0(0) = 3$ and $f_0(7) = 9$.

    $A_2 = \{1, 2\}$, $B_2 = \{4, 10\}$, $c_2 = 4$. We obtain $A'_2 = \emptyset$ and make no assignments.

    $A_3 = \{1, 2\}$, $B_3 = \{4, 10\}$, $c_3 = 5$. We obtain $A'_3 = \{1\}$ and assign $f_0(1) = 4$.

    For $4 \le j \le 8$ we have $A_j = \{2\}$, $B_j = \{10\}$. Finally, with $c_8 = 12$ we obtain $A'_7 = \{2\}$ and assign $f_0(2) = 10$.
\end{example}

In the following theorem, we will establish that all abelian groups possess the weak acyclic matching property.
\begin{theorem}\label{main}
    Every abelian group possesses the weak acyclic matching property.
\end{theorem}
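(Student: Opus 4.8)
The plan is to obtain Theorem~\ref{main} as an immediate application of Lemma~\ref{main-lemma}, the point being that the constraint $A \cap (A+B) = \emptyset$ trivializes the distinction between matchings and arbitrary bijections. So the first step is to record this: if $A, B \subseteq G$ are finite nonempty with $|A| = |B|$ and $A \cap (A+B) = \emptyset$, then for any bijection $f : A \to B$ and any $a \in A$ we have $a + f(a) \in A + B$, hence $a + f(a) \notin A$; thus every bijection from $A$ to $B$ is already a matching, and the set of matchings from $A$ to $B$ coincides with the set of all bijections.

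The second step is to apply Lemma~\ref{main-lemma} with $C = G$ and with $\oplus : A \times B \to G$ the restriction of the group operation. I would verify the two required hypotheses. Left cancellation, $a + b_1 = a + b_2 \iff b_1 = b_2$, is just the group law. For the inverse operation one takes $c \ominus a := c - a$; it is worth noting explicitly that throughout the construction in Lemma~\ref{main-lemma} the symbol $\ominus$ is only evaluated at pairs $(c_j, a)$ with $a \in A'_j$, for which there is by definition a witness $b \in B_j \subseteq B$ with $a + b = c_j$, so $c - a$ does lie in the relevant active subset of $B$. Lemma~\ref{main-lemma} then produces a bijection $f_0 : A \to B$ whose multiplicity function $m_{f_0}$ is unique among all bijections $A \to B$.

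The third step is to combine the two: $f_0$ is a matching by the first step, and $m_{f_0}$, being unique among all bijections, is in particular unique among all matchings, so $f_0$ is an acyclic matching from $A$ to $B$. Since $A$ and $B$ were arbitrary subject to the stated conditions, $G$ has the weak acyclic matching property; and since $G$ was an arbitrary abelian group, the theorem follows, thereby settling Conjecture~\ref{w.a.m.p} and Question~\ref{w.a.m}, with the resulting classification of abelian groups by the weak acyclic matching property being simply ``all of them''.

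I do not expect a genuine obstacle: the substantive work is already packaged in Lemma~\ref{main-lemma}, and the only care needed is the observation that $A \cap (A+B) = \emptyset$ is precisely what identifies ``acyclic matching'' with ``bijection having a unique multiplicity function'' in this setting, together with the minor bookkeeping that the partially-defined operation $\ominus = {-}$ is only ever invoked where it is meaningful.
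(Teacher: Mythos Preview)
Your proposal is correct and follows essentially the same approach as the paper: observe that the hypothesis $A\cap(A+B)=\emptyset$ makes every bijection a matching, then invoke Lemma~\ref{main-lemma} with the (cancellative) group operation to obtain a bijection with unique multiplicity function, which is therefore an acyclic matching. Your write-up is somewhat more detailed than the paper's (for instance in checking that $\ominus$ is only evaluated where it makes sense), but the strategy is identical.
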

\begin{proof}
  Let $G$ be an abelian group. Let $A$ and $B$ be nonempty finite subsets of $G$ with $|A|=|B|$ and $A\cap (A+B)=\emptyset$. Since the bijections $A \to B$ are precisely the matchings and the group operation is cancellative, we can apply lemma ~\ref{main-lemma} to obtain an acyclic matching from $A$ to $B$, completing the proof.
\end{proof}
\begin{remark}
    In the case that $A \subseteq C$ and $B \subseteq C$, any operation $\oplus: A \times B \to C$ for which $(C, \oplus)$ is a quasigroup has the left cancellation property. The definition of matching generalizes straightforwardly to quasigroups, and so does the proof of Theorem ~\ref{main}. With that in mind, one might explore the properties of matchings in broader contexts beyond abelian groups, such as quasigroups. Delving further into this line of research could prove to be worthwhile.
\end{remark}

\textbf{Data sharing:} Data sharing not applicable to this article as no datasets were generated or analysed.\\
\textbf{Conflict of interest:} To our best knowledge, no conflict of interests, whether of financial or personal nature, has influenced the work presented in this article.

\end{document}